\documentclass{amsart}
\usepackage{amsmath,amssymb,amsthm}

\newtheorem{theorem}{Theorem}
\newtheorem{proposition}[theorem]{Proposition}
\newtheorem{lemma}[theorem]{Lemma}

\theoremstyle{definition}

\theoremstyle{remark}

\newcommand{\QQ}{\mathbb{Q}}

\newcommand{\ZZ}{\mathbb{Z}}

\newcommand{\CC}{\mathbb{C}}

\newcommand{\Kcyc}{K^{\mathrm{cyc}}}

\newcommand{\ord}{\operatorname{ord}}

\title[PCF polynomials over cyclotomic closures]{Postcritically finite polynomials over cyclotomic closures of number fields}
\author{Patrick Ingram and Jorge Mello}
\date{\today}

\begin{document}
\maketitle

\begin{abstract}
We prove that, for any number field $K$ and and $d\geq 2$, there are at most finitely many monic, centred, post-critically finite polynomials of degree $d$ with coefficients in $\Kcyc$, the extension of $K$ by all roots of unity.
\end{abstract}

\section{Introduction}\label{sec:intro}

A polynomial $f(z)\in \CC[z]$ is \emph{post-critically finite (PCF)} if and only if the orbit under $f$ of each critical point of $f$ is finite. It follows from the compactness of the connectedness locus in the moduli space of polynomials that every PCF polynomial is conjugate, by a linear change of variables, to a monic polynomial with coefficients in $\overline{\QQ}$. It turns out that the algebraic degree of the coefficients offers a stratification of conjugacy classes of PCF polynomials of a given degree. Specifically, the main result of~\cite{Ingram} shows that the coefficients of PCF polynomials written in monic centred form lie in a set of bounded height, and so in particular one has that there are only finitely many PCF polynomials defined over and given finite extension $K/\QQ$, up to change of variables.

An analogous situation occurs with torsion points on an abelian variety. If $A$ is an abelian variety defined over a number field $K$, then $A(K)$ contains only finitely many torsion points,  by a standard argument involving heights. Ribet~\cite{ribet} showed something much stronger, namely that $A(\Kcyc)$ contains only finitely many torsion points, where $\Kcyc$ is the cyclotomic closure of $K$, the infinite algebraic extension obtained by adjoining all roots of unity to $K$. Our main result shows something similar for PCF polynomials.

\begin{theorem}\label{thm:main}
Let $K$ be a number field and $d\geq 2$. There are only finitely many monic centred
 PCF  polynomials of degree $d$ with coefficients
in $\Kcyc$.
\end{theorem}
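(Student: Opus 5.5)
We first reduce to a statement about bounded degree. By the main result of~\cite{Ingram}, the coefficients of monic centred PCF polynomials of degree $d$ have absolute Weil height bounded by a constant $B=B(d)$. The set of points of $\Kcyc$ of height at most $B$ is infinite, since it contains all roots of unity. So Northcott's theorem does not apply directly. The plan is to show that the coefficients of a PCF polynomial over $\Kcyc$ in fact lie in $K(\mu_N)$ for some $N$ bounded in terms of $d$ and $K$. Northcott then finishes the proof. The natural tool is the Galois action of $\mathrm{Gal}(\Kcyc/K)$, which is abelian, together with the observation that the PCF locus is defined over $\QQ$. Hence Galois conjugates of PCF maps are PCF.

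The key steps, in order, are as follows.

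\textbf{Step 1 (ramification).} Let $f$ be monic centred PCF with coefficients in $L=K(\mu_n)$. Consider a prime $\ell$ dividing $n$ to a high power, or a large prime $\ell \mid n$. We want to show that the coefficients cannot genuinely require $\mu_{\ell^k}$ once $\ell$ or $k$ is large.

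\textbf{Step 2 (local integrality).} PCF polynomials have good reduction properties away from primes dividing $d$. Indeed, at a place $v$ with $|d|_v=1$, the critical points of a PCF monic centred polynomial are $v$-integral, and hence so are the coefficients. This follows because an escaping critical point would have infinite orbit, by the standard filled Julia set argument. So for primes $\ell\nmid d$ the coefficients are $\ell$-adically integral.

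\textbf{Step 3 (Galois/ramification constraint).} Take $\sigma$ in the inertia group at a prime above $\ell$ that acts on $\mu_{\ell^k}$ by $\zeta\mapsto\zeta^{1+\ell^{k-1}}$. Such a $\sigma$ acts trivially modulo a prime $\lambda$ of small valuation. Then $f^\sigma$ is again monic centred PCF, and $f^\sigma\equiv f$ modulo $\lambda$. The idea is to show that two distinct PCF polynomials cannot be congruent modulo a prime that is too close to the one dividing $d$, or too shallow in a suitable sense. We would try to prove this using the rigidity of PCF maps: their multipliers at periodic points are $0$ or roots of unity or units with controlled reduction. Alternatively, one can use the fact that the PCF locus is a finite set cut out by equations whose discriminant-type quantities are supported on primes dividing $d$, so that distinct PCF points are $\ell$-adically far apart for $\ell\nmid d$. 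This would force $f^\sigma=f$, that is, $\sigma$ fixes the coefficients. Running this over all $\ell$ and $k$ shows that the field of definition lies in $K(\mu_N)$ with $N$ supported on primes $\ell\le C(d,K)$ and with bounded exponents.

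The main obstacle is Step 3, the separation of distinct PCF polynomials modulo primes. For $\ell\nmid d$, I would first try to prove that the PCF parameter scheme is étale over $\ZZ[1/d]$. This is in the spirit of transversality results (Epstein, Levy): modulo such $\ell$, PCF maps have no deformations preserving critical orbit relations. Étaleness gives that distinct PCF points stay distinct modulo every prime above $\ell$, so inertia at $\ell$ acts trivially and $L/K$ is unramified outside $d\cdot\mathrm{disc}(K)$. Combined with the degree bound $[K(f):K]\le\#\{\text{PCF of degree } d\}$ and Step 2, this places the coefficients in $K(\mu_N)$ with $N$ supported on finitely many primes. Bounding the exponents at the primes dividing $d$ then needs a separate local argument, using the $\ell$-adic size of the critical points.
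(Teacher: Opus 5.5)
\textbf{There is a genuine gap: the mechanism you propose for Step~3 is false.} Your reduction is fine as far as it goes: bounded height plus coefficients in a fixed field $K(\mu_N)$ gives finiteness by Northcott. But ``$N$ is bounded'' is equivalent to the theorem, so all the content sits in Step~3.

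The PCF locus is not \'etale over $\ZZ[1/d]$: distinct PCF polynomials can be congruent modulo primes $\ell\nmid d$. For $z^2+c$, the parameters at which $0$ has exact period $3$ are the roots of $f_c^3(0)/c=c^3+2c^2+c+1$. This cubic has discriminant $-23$, and
\[c^3+2c^2+c+1\equiv (c-15)^2(c-14)\pmod{23},\]
so two distinct monic centred PCF quadratics become congruent modulo a prime of $\overline{\QQ}$ above $23$. The rigidity and transversality results you cite (Epstein, Levy) concern the PCF locus in characteristic zero. They give no reducedness modulo $\ell$, and the example shows that none holds.

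Notice also that Step~3 never uses that the extension is cyclotomic. Your inertia argument works for any Galois closure. So if it were valid, it would prove that the field generated by the coefficients of \emph{every} PCF polynomial is unramified at every prime $\ell>d$. The cubic field generated by a root of $c^3+2c^2+c+1$ has discriminant $-23$ and so is ramified at $23$, refuting exactly this. Any correct argument must exploit arithmetic specific to $\Kcyc$, and your proposal supplies none.

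Two further steps are also empty. First, the bound $[K(f):K]\le\#\{\text{PCF of degree } d\}$ is vacuous, since there are infinitely many PCF polynomials of degree $d$ over $\overline{\QQ}$. A bound on $[K(f):K]$ is, via Northcott, essentially the statement you are trying to prove. Second, the exponents at the primes dividing $d$ are deferred to an unspecified ``local argument.'' Without bounding them you still face infinitely many fields $K(\mu_N)$, and Northcott does not apply.

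For comparison, the paper uses the cyclotomic structure through Dvornicich--Zannier. Adelically bounded elements of $\Kcyc$, which the coefficients and the critical-orbit products $W_{n,m}$ all are, can be written as $K$-linear combinations of a bounded number of roots of unity. Laurent's theorem then puts infinitely many of the PCF maps on a torsion coset. On that coset, $W_{q(n+1),qn}$ pulls back to a nonzero Laurent polynomial whose number of terms is bounded independently of $n$. Haj\'os' lemma bounds its order of vanishing, while a super-attracting parameter (from Dujardin--Favre) forces order at least $n+1$. An induction on the dimension of the torus disposes of degenerate cosets.
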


We recall that a polynomial is \emph{centred} if and only if its roots sum to zero. Since every polynomial $f(z)\in K[z]$ is conjugate over $K$ to a centred polynomial, and two monic centred polynomials are conjugate only by $z\mapsto \zeta z$ with $\zeta^{d-1}=1$, Theorem~\ref{thm:main} is equivalent to the statement that there are only finitely many conjugacy classes of monic PCF polynomials with coefficients in $\Kcyc$.

Note it follows from Theorem~\ref{thm:main} that if $B$ is an affine curve, and $f(z)\in K[B, z]$ is a non-isotrivial family of monic polynomials whose coefficients are regular functions on $B$, then only finitely many specializations $f_t$ for $t\in B(\Kcyc)$ are PCF. For a family with two independent critical orbits, there are only finitely many PCF specializations even over $\CC$ by results of Favre and Gauthier~\cite{FG}, but for families with only one symmetry class of infinite critical orbit (such as $z^d+c$) there will be infinitely many PCF specializations over $\overline{\QQ}$.

Our result is not the first that proves finiteness for PCF parameters over infinite algebraic extensions. Noytaptim and Petsche~\cite{np} showed that if $z^d+c$ is PCF with $c\in\QQ^\mathrm{tr}$, the maximal totally real extension of $\QQ$, then $c=0$; $c=-1$ and $d$ is even; or $c=-2$ and $d=2$. We also note that there are some previous results in arithmetic dynamics over cyclotomic and abelian closures, which overlap with our techniques~\cite{oy, chen, foz}.


The proof of Theorem~\ref{thm:main} is by contradiction and does not seem to produce any effective bounds. It follows from work of the first author~\cite{Ingram} that the coefficients of a PCF map in monic, centred form are adelically bounded. In other words, there is for each place $v\in M_\QQ$ a $C_v\geq 1$ depending only on $d\geq 2$ such that if  \[f(z)=z^d+a_{d-2}z^{d-2}+\cdots + a_0\]
with $a_i\in \overline{\QQ}$ is PCF, then $|a_i|_v\leq C_v$ for every extension of $|\cdot|_v$ to $\overline{\QQ}$, and $C_v=1$ for all but finitely many places of $\QQ$. Indeed, this applies not just to the coefficients of $a_i$, but to the elements of the critical orbits of $f$ as well. If $a_i\in \Kcyc$, then for any $n, m\geq 0$
\begin{equation}\label{eq:Wintro}W_{n, m}:=\prod_{f'(c)=0}(f^n(c)-f^m(c))\in\Kcyc,\end{equation}
where the product is taken over all critical points with multiplicity, is similarly bounded by an adelic constant that depends only on $d$.

Assuming for exposition that the $a_i$ and $W_{n, m}$ under consideration are algebraic integers, a result of Dvornicich and Zannier~\cite{DZ} extending earlier work of Loxton~\cite{Loxton} tells us that each coefficient $a_i\in\Kcyc$ and each $W_{n, m}$ associated to such an example can be written as a linear combination of a bounded number of roots of unity, with coefficients from a finite subset of $K$. Writing down the equations that express the $a_i$ and $W_{n, m}$ as linear combinations, and equating each of finitely many of the $W_{n, m}$ to the appropriate polynomials in the $a_i$, we obtain for each PCF polynomial a point on some variety whose coordinates are all roots of unity. Laurent's work~\cite{Laurent} on Lang's conjecture then gives us a torsion coset of a positive-dimensional torus $T$ contained in this variety. Restricted to this torsion coset and pulled back to $T$, the $a_i$ and $W_{n, m}$ are represented by Laurent polynomials. Given the setup so far, these Laurent polynomials are lacunary in the sense that each is the sum of a number of monomials which is bounded just in terms of $d$ (but independent of $n$ and $m$). A multivariable variant of a lemma of Haj\'os now tells us that the order of vanishing of $W_{n, m}$ on this torus is bounded by some constant that depends only on $d$, but crucially not on $n$ or $m$, unless $W_{n, m}=0$ identically.

This runs afoul of some results in complex dynamics. Any algebraic family of this form, if not isotrivial, will admit specializations at infinitely many $t\in T$ for which some infinite critical orbit on the generic fibre becomes periodic, for instance due to a result of Dujardin and Favre~\cite{DF}. Fix one such parameter $t\in T$, at which some critical point specializes to a point of period $q$. We then have $W_{q, 0}=0$ at $t$ by inspection of~\eqref{eq:Wintro} and, ignoring the possibility that $W_{n, m}=0$ identically for the moment, one can show by a simple induction that $W_{q(n+1), qn}$ vanishes to order at least $n$ at this parameter. This contradicts the earlier consequence of Haj\'os Lemma, once $n$ is large enough.

There are several problems with the proof sketch above. First, it might happen that some of our Laurent polynomials $W_{n, m}$ vanish identically, for instance if \emph{any} critical point of the generic fibre is preperiodic. One can replace~\eqref{eq:Wintro} with a product over non-preperiodic critical points on the generic fibre of a given family, but restriction to the torsion coset discussed above may introduce more critical orbit collisions. At least as problematic is that the torsion coset might only be defined only over a proper extension of the field $K$, which changes the output of the result of Dvornicich and Zannier, and consequently the order-of-vanishing bound derived from Haj\'os Lemma. Both of these problems are solved by proving the non-existence of families of polynomials with certain properties over non-trivial tori over any number field, by induction on the dimension of the torus.

\subsection*{Formalization} A formalization of a proof of Theorem~\ref{thm:main} from results in the cited works, produced by Codex and Claude Code from a draft of this paper with guidance from the authors, is available at \texttt{https://github.com/pingram3/pcf-cyclotomic-lean} It is worth commenting on what this formalization represents. First, it is a relative formalization, offering a Lean proof of Theorem~\ref{thm:main} \emph{assuming} the truth of the necessary results from the cited works. It is also formalization of the proof of Theorem~\ref{thm:main} in particular, and so does not constitute a Lean proof of any other claim in this paper (although it turns out to go via several of the numbered intermediate claims).

\subsection*{Acknowledgements}
Use of consumer-grade AI tools from Anthropic and OpenAI provided ideas that helped in generalizing earlier versions of this result from one-parameter families to all monic centred polynomials, as well in locating references to prior work and refereeing prior drafts. The authors became aware of the independent work~\cite{grz} during efforts to generalize this result from $K^{\mathrm{cyc}}$ to $K^{\mathrm{ab}}$ and to fully formalize these results.

\section{Laurent polynomials and torsion on tori}

In this section we recall some well-known properties of tori, over an algebraically closed field $K$ of characteristic zero. First, consider the multiplicative group $G=\mathbb{G}_\mathrm{m}^r$ over $K$, whose coordinate ring is the ring of Laurent polynomials $K[u_1^{\pm 1}, ..., u_r^{\pm 1}]$. The torsion in $G$ is precisely the set of points whose coordinates are roots of unity. A theorem of Laurent describes the closure of infinite sets of torsion points.

For a tuple $u=(u_1, ..., u_r)$ of variables, and a tuple $e=(e_1, ..., e_r)$ of integers, we write $u^e$ for $u_1^{e_1}\cdots u_r^{e_r}$, allowing us to write Laurent polynomials in the form $\sum \beta_i u^{e_i}$. Similarly, given a subtorus $H\subseteq G$ and a tuple $x =(x_1, ..., x_n)\in G$, we write $xH$ for the coset of $H$ by $x$. There if $H$ has dimension $r'$, there is an isomorphism (of varieties) $\phi:\mathbb{G}_m^{r'}\to xH$ whose coordinate functions have the form $x_j w^{e_j}$, where $w$ is the tuple of coordinates on $\mathbb{G}_m^{r'}$. Note that if $x$ is torsion, $\phi$ will map the torsion of $\mathbb{G}_m^{r'}$ to the torsion points of $\mathbb{G}_\mathrm{m}^r$ in $xH$.

The following lemma is a consequence of Laurent's Theorem~\cite{Laurent}.
\begin{lemma}\label{thm:laurent}
If $T\subseteq G$ is an infinite set of torsion points, then there is a positive-dimensional torsion coset $\eta H \subseteq G$ such that $\eta H\cap T$ is Zariski dense in $\eta H$.
\end{lemma}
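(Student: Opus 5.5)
We deduce this from Laurent's Theorem~\cite{Laurent}: for any subvariety $V\subseteq G$, the Zariski closure of the torsion points of $V$ is a finite union of torsion cosets. Apply it to $V=\overline{T}$, the Zariski closure of $T$ in $G$. Since every point of $T$ is torsion and lies in $V$, the torsion points of $V$ include $T$. Their closure is therefore all of $V$, so
\[\overline{T}=\eta_1H_1\cup\cdots\cup\eta_kH_k\]
with each $\eta_i$ torsion and each $H_i$ a subtorus. We may take this decomposition irredundant, so each $\eta_iH_i$ is an irreducible component of $\overline{T}$.

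Next we pick a positive-dimensional component. Since $T$ is infinite, $\overline{T}$ is not a finite set of points. Hence some component $\eta H:=\eta_iH_i$ has $\dim H\geq 1$.

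It remains to check that $\eta H\cap T$ is Zariski dense in $\eta H$. Write $\overline{T}=\eta H\cup Z$, where $Z$ is the union of the other components. Then $T\subseteq (\eta H\cap T)\cup Z$. Taking closures,
\[\overline{T}=\overline{\eta H\cap T}\cup Z.\]
If $\overline{\eta H\cap T}$ were a proper closed subset of the irreducible set $\eta H$, then $\eta H$ would be contained in $\overline{\eta H\cap T}\cup Z$ without being contained in $Z$, by irredundancy. Irreducibility of $\eta H$ then forces $\eta H\subseteq\overline{\eta H\cap T}$, a contradiction. So $\eta H\cap T$ is dense in $\eta H$.

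The only substantive input is Laurent's theorem itself. The remaining points are routine: the torsion cosets $\eta_iH_i$ are irreducible, because subtori are connected and translates of irreducible sets are irreducible, and an infinite closed set cannot have only zero-dimensional components.
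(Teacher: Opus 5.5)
Your proposal is correct, and it is the standard deduction the paper intends: the paper gives no separate proof and just calls the lemma a consequence of Laurent's theorem. Your steps fill in that citation: apply Laurent to $\overline{T}$, take an irredundant decomposition into irreducible torsion cosets, choose a positive-dimensional component, and use irreducibility to show $\eta H\cap T$ is dense in $\eta H$.
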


In general, let $V$ and $W$ be affine varieties over $K$, let $x\in V$, and let $\ord_x:K[V]\to \ZZ\cup\{\infty\}$ be defined in the usual way, so that
\[\ord_x(F)=\sup\{k:F\in\mathfrak{m}_x^k\},\]
with $\mathfrak{m}_x\subseteq K[V]$ the maximal ideal of functions vanishing at $x$. It will be useful to note that if $\phi:V\to W$ is a morphism, then for any $F\in K[W]$ we have
\begin{equation}\label{eq:orderpull}\ord_{\phi(x)}(F)\leq \ord_x(\phi^*F)\end{equation}
since $\phi^*(\mathfrak{m}_{\phi(x)})\subseteq \mathfrak{m}_x$.

\section{Adelic bounds for PCF polynomials}

We now recall a result on PCF polynomials. Recall that, given a number field $K$ with the usual set of places $M_K$, and $M_K$-constant is a function $M:M_K\to \mathbb{R}^+$ such that $M_v:=M(v)=1$ for all by finitely many $v\in M_K$. We will extend an $M_K$-constant to an $M_L$ constant, for a finite extension $L/K$, just by setting $M_w=M_v$ when $|\cdot|_w$ on $L$ restricts to $|\cdot|_v$ on $K$. In fact, all of our $M_K$-constants will be extensions of $M_\QQ$-constants.
\begin{proposition}\label{prop:house}
There is an adelic constant $M_v=M(d, v)\geq 1$ with the following property. Let
$f(z)=z^d+a_{d-2}z^{d-2}+\cdots+a_0\in\overline{\QQ}[z]$ be monic centered and
PCF. Then, for every place $v$ and every extension of $|\cdot|_v$ to $\overline{\QQ}$ we have that
\begin{enumerate}
\item\label{it:one} each $|a_i|_v\leq M_v$, and
\item\label{it:two} every critical point $c$ of $f$ and every $n, m\geq 0$, we have $|f^{n}(c)|_v\leq M_v$, and $|f^{n}(c)-f^m(c)|_v\leq M_v$.
\end{enumerate}
\end{proposition}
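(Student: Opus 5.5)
The plan is to use the standard escape-rate (Green's function) argument at each place $v$, done uniformly in the coefficients. Fix a place $v$ of $\QQ$ and an extension of $|\cdot|_v$ to $\overline{\QQ}$ (or to $\CC_v$). For a monic centred $f$ with coefficients $a_i$, set $A=\max(1,|a_i|_v^{1/(d-i)})$. The first step is an escape lemma: there is a constant $B_v\geq 1$, depending only on $d$ and $v$ and equal to $1$ when $v$ is non-archimedean with $v\nmid d$ (or at least for all but finitely many $v$), such that whenever $|z|_v> B_vA$ one has $|f(z)|_v>|z|_v$. Indeed $|f(z)|_v\geq |z|_v^d-\sum|a_i|_v|z|_v^i$ archimedeanly, and $|f(z)|_v=|z|_v^d$ ultrametrically once $|z|_v>A$. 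Iterating gives $|f^n(z)|_v\to\infty$. So any point with bounded forward orbit, in particular every critical point and every point of its orbit when $f$ is PCF, lies in the disc $|z|_v\leq B_vA$.

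The second step, which is the main one, is to bound $A$ itself in terms of the critical points. Write $f'(z)=d\prod_{j}(z-c_j)$. Then each $a_i$ is, up to the factor $d/i$ (with $a_0$ handled as below), an elementary symmetric function of the critical points $c_1,\dots,c_{d-1}$. Hence $|a_i|_v\leq D_v\max_j|c_j|_v^{d-i}$ for $i\geq 1$, where $D_v$ involves $|d|_v$ and $|1/i|_v$ and is $1$ for $v\nmid d!$. For $a_0$, use $a_0=f(c_1)-(f(c_1)-a_0)$, where $f(c_1)-a_0$ is a polynomial in $c_1$ and the other $a_i$. This gives $A\leq D'_v\max(1,\max_j|c_j|_v,\max_j|f(c_j)|_v^{1/d})$. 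By the first step, every $|c_j|_v$ and $|f(c_j)|_v$ is at most $B_vA$. This alone is circular, so I would instead argue by contradiction with the dominant term. If $A$ is large and $i$ attains the maximum, then the polynomial $f(z)/A^d$ in the rescaled variable $w=z/A$ is a monic centred polynomial with coefficients of absolute value at most $1$, one of which is exactly $1$. Such polynomials lie in a compact family, archimedeanly, or have good reduction, ultrametrically. In that family no member has all critical points of bounded orbit at scale $A$. Concretely, some critical point satisfies $|f(c)|_v\geq \epsilon_v A^d$ with $\epsilon_v>0$ depending only on $d$ and $v$, because a monic centred polynomial whose critical points all map into a small disc is close to $w^d$, contradicting a coefficient of size $1$. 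Then $|f(c)|_v\geq\epsilon_vA^d>B_vA$ once $A$ exceeds a constant, which contradicts boundedness of the orbit. This yields $A\leq A_v$, with $A_v=1$ for all but finitely many $v$; at good places this is the Newton-polygon form of the argument.

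Finally I would set $M_v=\max(A_v^{d},2B_vA_v)$, enlarged archimedeanly by a factor $2$ to handle differences. Then (1) follows from $|a_i|_v\leq A^{d-i}\leq A_v^d$. For (2), $|f^n(c)|_v\leq B_vA_v$, and $|f^n(c)-f^m(c)|_v\leq 2B_vA_v$, with the factor $2$ needed only archimedeanly since the ultrametric inequality gives the bound directly. The main obstacle is the uniform lower bound $\epsilon_v$ in the compactness step, together with checking $\epsilon_v=B_v=1$ at all places $v\nmid d$. There I would use the reduction mod $v$: a degree-$d$ monic centred polynomial with integral coefficients, not all in the maximal ideal, cannot have all critical values in the maximal ideal while $p\nmid d$.
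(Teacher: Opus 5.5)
\textbf{Gap: the uniform bound $\epsilon_v$ at bad non-archimedean places.} Your architecture is sound, and it differs from the paper's. You use escape radius $B_vA$ and then rescale to force $A\le (B_v/\epsilon_v)^{1/(d-1)}$. The paper instead imports a bound $|c_i|_v\le\Delta_v$ on critical points from Ingram's Lemma~2.5, and turns it into coefficient bounds via explicit conjugation formulas. Everything in your version rests on one claim: there is $\epsilon_v>0$ such that every monic centred $\tilde f$ with $\max_i|\tilde a_i|_v=1$ has a critical value with $|\tilde f(c)|_v\ge\epsilon_v$, uniformly over all extensions of $|\cdot|_v$. You establish this archimedeanly by compactness, and (after a correction) for residue characteristic $p>d$ by reduction. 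You flag the finitely many non-archimedean places with $p\le d$ as the main obstacle but give no argument there. Compactness is unavailable, since the unit polydisc over $\CC_v$, or over $\overline{\QQ}$ with all extensions of $v$, is not compact. The reduction argument also genuinely breaks. In fact your stated reduction claim for $v\nmid d$ is false. Take $d=3$, $p=2$, $\tilde f(z)=z^3+z$: it reduces to $z(z+1)^2$, and its critical points satisfy $c^2=-1/3$. Hence $\tilde f(c)=2c/3$ and $|\tilde f(c)|_2=1/2$ for both critical points, although $\tilde a_1=1$. The multiplicity argument ($\bar f(c)=0$ and $\ord_c\bar f'=m$ force $\ord_c\bar f=m+1$) needs $p$ not to divide the root multiplicity, so it works only for $p>d$.

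\textbf{Fix: a uniform algebraic identity.} Let $s_1,\dots,s_{d-1}$ be the elementary symmetric functions of the critical values $f(c_j)$. These are polynomials in the $a_i$ with coefficients in $\ZZ[1/d]$. In characteristic $0$ your multiplicity argument shows their only common zero is $a=0$. By the Nullstellensatz there are $N\ge 1$, a nonzero integer $D$, and $P_{i,k}\in\ZZ[a_0,\dots,a_{d-2}]$ with $D a_i^N=\sum_k P_{i,k}(a)\,s_k(a)$ for each $i$. Evaluate at a rescaled $\tilde a$ with $\max_i|\tilde a_i|_v=1$, and set $\mu=\max_j|\tilde f(c_j)|_v$, so that $|s_k|_v\le\mu^k$ ultrametrically. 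This gives $|D|_v\le\max_k\mu^k$, hence $\mu\ge|D|_v$, at every non-archimedean place and for every extension. That is your $\epsilon_v$, and it equals $1$ for $v\nmid D$. With this inserted, and the good places taken to be $v\nmid D$, your argument gives a self-contained proof of part~(1) that avoids the cited lemma. Part~(2) is then the same escape argument as in the paper.
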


\begin{proof}
Part~\eqref{it:one} is contained in \cite{Ingram}, but not quite explicitly. Let $\sigma_j = \sigma_j(c_1, \dots, c_{d-1})$ denote the elementary symmetric polynomial of degree $j$, and set
\[g(z)=\frac{1}{d}z^d-\frac{\sigma_1}{d-1}z^{d-1}+\cdots +(-1)^{d-1}\sigma_{d-1}z\]
so that
\[g'(z)=\prod_{i=1}^{d-1}(z-c_i).\]
If $g$ is PCF, then \cite[Lemma~2.5]{Ingram}  shows that
 $|c_i|\leq \Delta_v$, for some $M_\QQ$-constant $\Delta$ (which is $e^{\delta}$ in the notation of that paper), and for every extension of $|\cdot|_v$ to $\overline{\QQ}$ (applying the lemma to a Galois conjugate of $g$ if necessary). Increasing $\Delta$, we may assume that $\Delta_v\geq 1$ for all $v$. Now, let $\psi(z)=\alpha z + \beta$ with $\alpha^{d-1}=d$ and $\beta=(c_1+\cdots +c_{d-1})/(d-1)$, so that \[f(z)=g^{\psi}(z)=\psi^{-1}\circ g\circ \psi(z)=z^d+a_{d-2}z^{d-2}+\cdots +a_0\] is monic and centred. 
Expanding, the coefficients are given by
\[
a_k = \frac{\alpha^{k-1}}{k}
   \sum_{j=0}^{d-k} (-1)^{j} \binom{d-1-j}{k-1} \sigma_j \beta^{d-k-j}
\]
for $1\leq k\leq d-2$, and
\[
a_0 = \frac{g(\beta) - \beta}{\alpha}
=\frac{1}{\alpha} \left( \sum_{m=1}^{d} \frac{(-1)^{d-m}}{m} \sigma_{d-m} \beta^{m} - \beta \right).
\]
So for $1\leq k\leq d-2$ we have for archimedean places
\begin{align}
|a_k|_v&\leq \left|\frac{d^{(k-1)/(d-1)}}{k}\right|_v\sum_{j=0}^{d-k}  \binom{d-1-j}{k-1}\left| \sigma_j\, \beta^{\,d-k-j}\right|_v \nonumber\\
&\leq \left|\frac{d^{(k-1)/(d-1)}}{k}\right|_v\sum_{j=0}^{d-k}  \binom{d-1-j}{k-1}\binom{d-1}{j}\Delta_\infty^{d-k},\label{eq:archak}
\end{align}
and
\begin{equation}
\label{eq:archa0}|a_0|_v \leq d^{-1/(d-1)} \sum_{m=1}^{d}\binom{d-1}{m-1} \frac{1}{m} \Delta_\infty^{d} + \Delta_\infty.
\end{equation}
Choosing $N_v$ to be the maximum of the upper bounds in~\eqref{eq:archak} and~\eqref{eq:archa0} for $1\leq k \leq d-2$, we have $|a_k|_v\leq N_v$ for all $k$ with $N_v$ depending just on $d$.

For $v$ non-archimedean, we have $|\sigma_j|_v\leq \Delta_v^j$ and $|\beta|_v\leq |d-1|_v^{-1}\Delta_v$, and so
\begin{align}
|a_k|_v&\leq \left|\frac{d^{(k-1)/(d-1)}}{k}\right|_v\max_{j=0}^{d-k} \left| \sigma_j\, \beta^{d-k-j}\right|_v\nonumber \\
&\leq \left|\frac{d^{(k-1)/(d-1)}}{k(d-1)^{d-k}}\right|_v\Delta_v^{d-k}\label{eq:naak}
\end{align}
and
\begin{equation}\label{eq:naa0}|a_0|_v \leq |d|_v^{-1/(d-1)}|d-1|_v^{-d}\Delta_v^{d}\max_{1\leq m\leq d}|m|_v^{-1},\end{equation}
and so we may again choose $N_v$ to satisfy $|a_k|_v\leq N_v$ for all $k$, noting that the upper bounds in~\eqref{eq:naak} and~\eqref{eq:naa0} are 1 for those $v$ with $\Delta_v=1$ and $|k|_v=1$ for all $1\leq k\leq d$, which is all but finitely many.

So $|a_i|_v\leq N_v$ for some $M_\QQ$-constant $N$ depending only on $d$ and $\Delta$, and we may take $N_v\geq 1$. But every monic centred polynomial is the image of some $g$ under this map, and so we have our first claim. Indeed, if $f$ is monic and centred, let $p$ be a fixed point of $f$, and let $g=\psi\circ f\circ \psi^{-1}$ with $\psi(z)=\alpha(z-p)$. Then $g(0)=0$ and $g$ has leading coefficient $1/d$, so $g$ has the form above, and since the critical points of $f$ sum to zero, the critical points $c_i$ of $g$ have mean $-\alpha p$, so that $\psi(z)=\alpha z+\beta$ as above. Since being PCF is invariant under conjugation, the bound on the $c_i$ applies.

For~\eqref{it:two} we recall the standard escape argument. If $v$ is non-archimedean and $|a_k|_v\leq N_v$ for all $k$, then $|z|_v>N_v\geq 1$ gives
\[|f(z)|_v=|z|^d_v\cdot |1+a_{d-2}z^{-2}+\cdots +a_0z^{-d}|_v=|z|_v^d>|z|_v,\]
and so by induction $z$ is not preperiodic for $f$. It follows for $f$ PCF that $|f^k(c)|\leq N_v$ for all $k\geq 0$ and all critical points $c$ of $f$.

In the case where $v$ is archimedean, with $|a_k|_v\leq N_v$, we see that $|z|_v> 2(d-1)N_v$ gives 
\[|1+a_{d-2}z^{-2}+\cdots +a_0z^{-d}|_v\geq \frac{1}{2}\]
and hence
\[|f(z)|_v\geq \frac{1}{2} |z|_v^d> |z|_v.\]
Again, such a point cannot be preperiodic, so we must have $|f^k(c)|_v\leq 2(d-1)N_v$ for all $f'(c)=0$ and all $k\geq 0$. So we set $M_v=4(d-1)N_v$ for archimedean places, and $M_v=N_v$ for non-archimedean. By the triangle and ultrametric inequalities, we have both $|f^n(c)|\leq M_v$ and $|f^n(c)-f^m(c)|\leq M_v$ for all $n, m$, and $v$.
\end{proof}

It turns out that adelically bounded sets of elements of $\Kcyc$ can be represented by certain linear forms over $K$, evaluated at points whose coordinates are zero or roots of unity. The following is a modification of a result of Dvornicich and Zannier \cite{DZ}, generalizing work of Loxton \cite{Loxton}.

\begin{lemma}\label{lem:DZ}
Let $K$ be a number field, and let $M_v$ be an adelic constant. Then exists a linear form \begin{equation}\label{eq:loxtonform}L(x)=e_1x_1+\cdots +e_mx_m\in K[x]\end{equation} such that 
every $\alpha\in\Kcyc$ with $|\alpha|_w\leq M_v$ for all $v\in M_K$ and all places $w$ of $K(\alpha)$ above $v$ can be written as $\alpha=L(\zeta_1, ..., \zeta_m)$, with each $\zeta_i$ either zero or a root of unity.
\end{lemma}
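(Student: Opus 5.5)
The plan is to reduce to the Dvornicich--Zannier theorem, which concerns algebraic integers in $\Kcyc$ all of whose conjugates have bounded absolute value. As I recall it, their result states that such an integer is a sum of at most $B$ roots of unity times elements of a fixed finite set $\mathcal{S}\subseteq K$.

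\textbf{Step 1: clear denominators.} Let $S$ be the finite set of non-archimedean places $v$ with $M_v>1$. Choose a nonzero rational integer $D$ such that $|D|_v\leq M_v^{-1}$ for every $v\in S$. For example, take $D$ to be a product of sufficiently high powers of the rational primes below $S$. Then for $\alpha$ as in the statement, $\beta=D\alpha$ satisfies $|\beta|_w\leq 1$ at all non-archimedean $w$, so $\beta$ is an algebraic integer. At archimedean places we have $|\beta|_w\leq |D|_\infty M_v$. Since $\alpha\in\Kcyc$ and the conjugates of $\alpha$ over $K$ correspond to the various extensions $w$ of the places of $K$, every conjugate of $\beta$ over $K$, and hence over $\QQ$, has absolute value at most $C:=|D|_\infty\max_{v\mid\infty}M_v$.

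\textbf{Step 2: apply Dvornicich--Zannier.} Applied to $\beta$ with the constant $C$, their theorem writes
\[\beta=\sum_{i=1}^{k} s_i\xi_i,\qquad k\leq B,\ s_i\in\mathcal{S},\ \xi_i \text{ roots of unity}.\]
Here $B$ and the finite set $\mathcal{S}\subseteq K$ depend only on $K$ and $C$. The main point to check is that their finiteness statement is in this form, namely with coefficients from a finite subset of $K$ (or of $\mathcal{O}_K$) and a bounded number of terms. This is the key input, and I would quote it rather than reprove it. If their statement is only that $\beta$ is a sum of boundedly many roots of unity times elements of $\mathcal{O}_K$ of bounded house, then the coefficients still lie in a finite set, by Northcott's theorem applied in the fixed field $K$.

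\textbf{Step 3: build the linear form.} Enumerate $\mathcal{S}=\{s^{(1)},\dots,s^{(t)}\}$ and let $L$ have $m=Bt$ variables, with coefficient $e_{(j,r)}=s^{(j)}/D$ for $1\leq j\leq t$ and $1\leq r\leq B$. Given $\alpha$, group the terms of the expression for $\beta$ by which coefficient $s^{(j)}$ occurs. Each coefficient is used at most $B$ times, so we assign the corresponding roots of unity to the variables $x_{(j,1)},\dots,x_{(j,B)}$ and set the unused variables to $0$. This gives $\alpha=L(\zeta_1,\dots,\zeta_m)$ with each $\zeta_i$ either zero or a root of unity, as required.

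The main obstacle is Step 1's passage from the place-wise hypothesis to a bound on all conjugates. We need every embedding of $K(\alpha)$ to restrict to some place of $K$ whose bound applies. This holds because the hypothesis is imposed for all $w$ above every $v$, so I would state this explicitly. After that, the Dvornicich--Zannier theorem does the real work.
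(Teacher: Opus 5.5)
Your proposal is correct and follows the paper's proof essentially step for step. You clear denominators with a rational integer $D$ satisfying $|D|_v\le M_v^{-1}$ at the finite places, then apply Dvornicich--Zannier's Theorem~L to $D\alpha$ with archimedean bound $D\max_{v\mid\infty}M_v$, and finally build $L$ in $sN$ variables with coefficients $c_j/D$, placing each term in an unused variable. The Northcott fallback in your Step 2 is not needed, since Theorem~L as cited already gives coefficients from a finite subset of $K$.
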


\begin{proof}
Dvornicich--Zannier~\cite[Theorem~L]{DZ} prove that for every $B'>0$ there are a finite set $E=\{c_1, ..., c_s\}\subseteq K$ and an integer $N\geq 1$ such that every algebraic integer in $\Kcyc$ all of whose conjugates have absolute value at most $B'$ can be written as $\sum_{\nu=1}^{b}c_{j_\nu}\xi_\nu$ with $b\leq N$, each $c_{j_\nu}\in E$, and each $\xi_\nu$ a root of unity. In general, choose an integer $D\geq 1$ such that $|D|_v\leq M_v^{-1}$ for all finite places $v$, and let $M'=\max_{v\in M_K^\infty} DM_v$. Then our hypothesis implies that $D\alpha$ is an algebraic integer all of whose conjugates have absolute value at most $M'$, so we may write $D\alpha = \sum_{\nu=1}^b c_{j_\nu} \xi_\nu$ with $b\leq N$. Now let $L$ be the linear form in the $sN$ variables $x_{j, k}$, $1\leq j\leq s$ and $1\leq k\leq N$, given by
\[L(x)=\sum_{j=1}^s\sum_{k=1}^N \frac{c_j}{D}x_{j, k}.\]
Placing each term $c_{j_\nu}\xi_\nu$ in an unused variable $x_{j_\nu, k}$, and setting the remaining variables to zero, we obtain $\alpha=L(\zeta)$ with each $\zeta_{j, k}$ either zero or a root of unity.
\end{proof}

\section{Haj\'os' Lemma}

We will need the following higher-dimensional variant of a lemma of Haj\'os. 

\begin{lemma}\label{lem:hajos}
Let $x\in\mathbb{G}_\mathrm{m}^m (\CC)$, and let $F$ be a non-zero Laurent polynomial in $m$ variables expressible as a linear combination of at most $T$ monomials.
Then
\[\ord_x\left(F\right)\leq T-1.\]
\end{lemma}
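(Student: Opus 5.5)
We reduce to the classical one-variable Hajós lemma by restricting $F$ to a suitable curve through $x$. The one-variable statement is this: if $g(t)=\sum_{i=1}^{T}\beta_i t^{k_i}$ is a nonzero Laurent polynomial with at most $T$ terms and $t_0\neq 0$, then $\ord_{t_0}(g)\leq T-1$.

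For the one-variable lemma, first clear denominators and assume the $k_i$ are distinct nonnegative integers. Suppose $(t-t_0)^T$ divides $g$. Then $(t\frac{d}{dt})^j g(t_0)=0$ for $0\leq j\leq T-1$, which gives the linear system $\sum_i \beta_i k_i^j t_0^{k_i}=0$. Its matrix is a Vandermonde matrix in the distinct $k_i$, scaled by the nonzero factors $t_0^{k_i}$. Hence all $\beta_i=0$, a contradiction. Alternatively one can induct on $T$: divide by $t^{k_1}$ and differentiate, which lowers the order at $t_0$ by at most one and the number of terms by one.

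For the multivariable case, write $F=\sum_{i=1}^{T}\beta_i u^{e_i}$ with distinct exponent vectors $e_i\in\ZZ^m$ and all $\beta_i\neq 0$. Choose an integer vector $a\in\ZZ^m$ with the inner products $\langle a,e_i\rangle$ pairwise distinct; this is possible because we only need to avoid finitely many hyperplanes. Define $\phi:\mathbb{G}_\mathrm{m}\to\mathbb{G}_\mathrm{m}^m$ by $\phi(t)=(x_1t^{a_1},\dots,x_mt^{a_m})$, so that $\phi(1)=x$. Then
\[\phi^*F(t)=\sum_{i=1}^T\beta_i x^{e_i}t^{\langle a,e_i\rangle}.\]
Since the exponents are distinct and the coefficients $\beta_i x^{e_i}$ are nonzero, this is a nonzero Laurent polynomial in one variable with at most $T$ terms.

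By \eqref{eq:orderpull}, $\ord_x(F)\leq\ord_1(\phi^*F)$, and the one-variable lemma bounds the right-hand side by $T-1$. The order $\ord_1$ here is the usual order of vanishing at $t=1$ on $\mathbb{G}_\mathrm{m}$, since that local ring is a DVR.

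The main point needing care is that $\phi^*F$ is not identically zero. This is exactly why we choose $a$ separating the exponents, so that no cancellation occurs after restriction. The remaining steps are routine.
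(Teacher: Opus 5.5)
Your proposal is correct and follows the paper's proof. Like the paper, you restrict along $t\mapsto(x_1t^{s_1},\dots,x_mt^{s_m})$ with $s$ chosen so that the exponents stay distinct, apply \eqref{eq:orderpull}, and invoke the one-variable bound. The only difference is your main one-variable argument: a Vandermonde system obtained from the Euler operator $t\,\frac{d}{dt}$, in place of the paper's induction on $T$ by dividing out a monomial and differentiating, which you also mention as an alternative.
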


\begin{proof}
We start with the case $m=1$, which is well-known but we prove for completeness by induction on $T$. Of course, for $T=1$ we have that $F$ is a (non-zero) monomial, and so does not vanish on $\CC^\times$. Now, if the claim is known for $T=k$, consider
$F=\sum_{i=1}^{k+1} \beta_i u^{r_i}$. If any $\beta_i$ vanishes or if the exponents are not distinct, we can re-write this expression with fewer terms and apply the induction hypothesis directly, so without loss of generality suppose that $r_1>r_2\cdots >r_{k+1}$ with $\beta_i\neq 0$ for all $i$. If $F(x)\neq 0$ there is nothing to prove, so suppose $F(x)=0$. For $x\neq 0$ any polynomial $P$ vanishing at $x$ we have $\ord_x(P)=\ord_x(P')+1$, and hence
\begin{multline*}\ord_x\left(F\right)=\ord_x\left(\sum_{i=1}^{k+1} \beta_i u^{r_i-r_{k+1}}\right)\\=\ord_x\left(\sum_{i=1}^{k} (r_i-r_{k+1})\beta_i u^{r_i-r_{k+1}-1}\right)+1\leq k-1+1\end{multline*}
by the induction hypothesis. So the claim is established in dimension 1.

Now, let $F=\sum_{i=1}^T \beta_iu^{r_i}$ in the original context, assume without loss of generality that no $\beta_i$ vanishes, and suppose that $F(x)=0$ for $x=(x_1, ..., x_m)\in (\CC^\times)^m$. For exponents $s_i\in \ZZ$ to be determined, set
\[\psi(t)=(x_1 t^{s_1}, ..., x_mt^{s_m}),\]
and note that
\[F\circ \psi (t)=\sum_{i=1}^T \beta_i x^{r_i} t^{r_i\cdot s},\]
where $r_i\cdot s$ is the usual dot product with $s=(s_1, ..., s_m)$, and
\[\ord_x(F)\leq \ord_{1}(F\circ\psi)\] by~\eqref{eq:orderpull}.
Thus we reduce to the 1-dimensional case as long as $F\circ \psi$ does not vanish identically, but the dot products $r_i\cdot s$ are distinct except on a finite union of hyperplanes in $\ZZ^m$, and so we can choose $s$ so that $F\circ\psi$ has the same number of non-zero terms as $F$.
\end{proof}

\section{Super-attracting parameters}

Let $R$ be a finitely generated $\CC$-algebra domain with field of fractions $F$, and let $\mathfrak{m}$ be a maximal ideal of $R$ with residue field $\CC$. As usual, we identify maximal ideals of $R$ with closed points on $\operatorname{Spec}(R)$. If $f(z)\in F(z)$ is a monic polynomial of degree $d$, we say that $\mathfrak{m}$ is a \emph{super-attracting parameter} if and only if   the coefficients of $f$ are regular at $\mathfrak{m}$ and there is a finite extension $E/F$, a critical point $f'(\zeta)=0$ in $E$ with infinite forward orbit under $f$, and an extension $\mathfrak{n}$ of $\mathfrak{m}$ to $E$ such that $\zeta$ becomes periodic modulo $\mathfrak{n}$.

\begin{lemma}\label{lem:super}
Let $R=\CC[\mathbb{G}_\mathrm{m}^r]$ and let $f(z)\in R[z]$ be monic, centred, and non-isotrivial. Then there is a super-attracting parameter $x\in \mathbb{G}_\mathrm{m}^r(\CC)$ for $f$.
\end{lemma}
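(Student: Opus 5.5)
Here $f$ is a family $f_x(z)=z^d+a_{d-2}(x)z^{d-2}+\cdots+a_0(x)$ with $a_i\in R$, that is, a morphism $\Phi:\mathbb{G}_\mathrm{m}^r\to\mathbb{A}^{d-1}$ into the space of monic centred polynomials. Non-isotriviality means $\Phi$ is non-constant; since distinct monic centred polynomials are conjugate only by $z\mapsto\zeta z$ with $\zeta^{d-1}=1$, this is the same as the family not being constant up to conjugacy. The plan is to reduce to a one-parameter family and apply the Dujardin--Favre result on the bifurcation locus~\cite{DF}.

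\emph{Reduction to a curve.} Since $\Phi$ is non-constant, some $a_i$ is a non-constant Laurent polynomial. Choose a one-parameter subgroup coset $\psi(t)=(x_1t^{s_1},\dots,x_rt^{s_r})$, as in the proof of Lemma~\ref{lem:hajos}, with $s$ chosen so that the exponents of $a_i$ have distinct dot products with $s$. Then $a_i\circ\psi$ is a non-constant Laurent polynomial in $t$, and $\psi^*f$ is a non-isotrivial family over the curve $\mathbb{G}_\mathrm{m}$. If a parameter $t_0$ is super-attracting for $\psi^*f$, then $x=\psi(t_0)$ is super-attracting for $f$. The critical point with infinite orbit over $\CC(t)$ lies over a finite extension of $\CC(t)$. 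One must lift it to a critical point of $f$ over an extension of $F$ whose orbit is also infinite, which holds because a specialization of a preperiodic point is preperiodic. One must also check that the places extend compatibly, which is routine via the integral closure of $R$ and going-up. So we may assume $r=1$.

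\emph{The one-parameter case.} The family is now $f_t$ with $t\in\mathbb{G}_\mathrm{m}$. After a finite base change $C\to\mathbb{G}_\mathrm{m}$, the critical points become regular functions $c_1,\dots,c_{d-1}$ on a smooth affine curve $C$. First, some $c_j$ is not persistently preperiodic. Otherwise every $f_t$ would be PCF, and by Proposition~\ref{prop:house} together with the finiteness of PCF maps of bounded height (via~\cite{Ingram}, or simply the algebraicity and rigidity of PCF maps), the map $\Phi$ would take values in a countable set and hence be constant, contradicting non-isotriviality. So take $c=c_j$ with infinite orbit over $\CC(C)$. The family is non-isotrivial and $c$ is not persistently preperiodic, so by Dujardin--Favre~\cite{DF} (equivalently, McMullen's rigidity theorem together with Montel's theorem) $c$ is active at some parameter $t_1\in C$. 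Normality then fails near $t_1$. A standard Montel argument, comparing $f_t^n(c(t))$ with $c(t)$ itself and avoiding three values, produces nearby $t_0$ with $f_{t_0}^q(c(t_0))=c(t_0)$ for some $q\geq1$. Such $t_0$ is a super-attracting parameter, and its image in $\mathbb{G}_\mathrm{m}$ is the desired $x$.

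\emph{Main obstacle.} The delicate point is the Montel step. One needs activity to yield periodicity of $c$ itself, not merely preperiodicity. I would handle this as follows. Near an active parameter the family $t\mapsto f_t^n(c(t))$ is not normal, so it cannot omit the three moving values $c(t)$, $f_t(c(t))$, and a repelling periodic point $p(t)$ followed holomorphically. The values $f_t(c(t))$ and $p(t)$ lead only to preperiodic solutions, which are excluded only through suitable bookkeeping. The cleaner route is to cite directly the density of centres of hyperbolic components (super-attracting parameters) in the bifurcation locus of a one-dimensional non-isotrivial family, as in~\cite{DF}; I expect the proof to rely on that citation.
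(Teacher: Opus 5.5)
Your proposal is correct and is essentially the paper's proof: a generic monomial curve reduces to $r=1$ (the lift has infinite orbit because a preperiodic relation upstairs would restrict to the curve), compactness of the connectedness locus yields a critical point $c$ that is not persistently preperiodic (your countability argument rests on the same input), and Dujardin--Favre's Theorem~2.5 and Proposition~2.4 give a parameter where $c$ is periodic. Two small repairs: first, $s$ must also satisfy $s\cdot e\neq 0$ for every nonzero exponent $e$, since otherwise a lone monomial such as $u_1u_2^{-1}$ becomes constant under $s=(1,1)$; second, the citation you need is Proposition~2.4 applied to $c$ itself, not density of hyperbolic centres, because at a centre $c$ may only be strictly preperiodic (e.g.\ in a capture component) --- and if you want the Montel step by hand, use the targets $\infty$, $c(t)$ and a local branch $w(t)\neq c(t)$ of $f_t^{-1}(c(t))$, since hitting either finite target makes $c$ periodic.
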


\begin{proof}
The claim follows in the stated generality from results of Dujardin and Favre~\cite{DF}, but in the interest of avoiding some technicalities, we apply these results in the case of dimension 1, and conclude the higher-dimensional statement by restricting to a curve.

In the case $r=1$, choose a smooth affine curve $B$ with a finite surjective map $B\to \mathbb{G}_\mathrm{m}$ such that every critical point $c$ of $f$ is defined over $\CC[B]$. In a slight abuse of notation, we will also denote the pull-back of $f$ to $B$ by $f$. We are given that $f$ is non-isotrivial (which is true over $B$ given that it is true over $\mathbb{G}_\mathrm{m}$), so each $(f, c)$ is a holomorphic family of rational maps of degree $d$ with a marked critical point, as per~\cite{DF}. If every critical point $c$ were persistently preperiodic, then every fibre of the family would be PCF, and hence in the connectedness locus, which is compact in the space of monic centred polynomials~\cite{BH}. The coefficients of $f$ would then be bounded regular functions on $B$, hence constant, which is impossible given that $f$ is not isotrivial. So some critical point $c$ has infinite forward orbit on the generic fibre. By~\cite[Theorem~2.5]{DF}, the pair $(f, c)$ is not passive, since it is neither isotrivial nor persistently preperiodic, and then~\cite[Proposition~2.4 and its proof]{DF} gives a parameter $b_0\in B$ at which $c$ is periodic. The image of $b_0$ in $\mathbb{G}_\mathrm{m}$ is a super-attracting parameter.

Now consider the general case, with $a_i\in \CC[u_1^{\pm 1}, ..., u_r^{\pm 1}]$.   If $a_i(u)=\sum_{j=1}^{e_i} \gamma_j u^{r_{i, j}}$, note that the conditions $\beta\cdot r_{i,j}=\beta\cdot r_{i', j'}$ for $r_{i, j}\neq r_{i', j'}$ define finitely many hyperplanes in $\ZZ^r$, so we can choose  $\beta\in\ZZ^r$ with the dot products $\beta\cdot r_{i,j}$ distinct when the $r_{i, j}$ are, and with $\beta\cdot r_{i,j}\neq 0$ when $r_{i,j}\neq 0$. Let $\phi(t)=(t^{\beta_1}, ..., t^{\beta_r})$, which defines a map $\psi:\mathbb{G}_\mathrm{m} \to \mathbb{G}_\mathrm{m}^r$. The polynomials $\phi^*a_i\in \CC[t^{\pm 1}]$ have the same number of distinct monomials as the corresponding $a_i$, and non-constant monomials remain non-constant, and so in particular the $\phi^*a_i$ are not all constant. For monic centred polynomials, being isotrivial is the same as having constant coefficients, so the family $z^d+\phi^*a_{d-2}(t)z^{d-2}+\cdots +\phi^*a_0$ is not isotrivial. Now let $X\to \mathbb{G}_\mathrm{m}^r$ be a finite cover on which the critical points $c_1, ..., c_{d-1}$ of $f$ are regular functions, and let $B$ be the normalization of an irreducible component of $X\times_{\mathbb{G}_\mathrm{m}^r}\mathbb{G}_\mathrm{m}$ (fibre product over $\phi$) dominating $\mathbb{G}_\mathrm{m}$. Applying the previous case on $B$, some $c_i|_B$ has infinite forward orbit and is periodic at some $b_0\in B$. Then $c_i$ has infinite forward orbit on $X$, since a relation $f^n(c_i)=f^m(c_i)$ on $X$ would restrict to $B$, and the image of $b_0$ in $X$ lies above $x=\phi(t_0)$, where $t_0$ is the image of $b_0$ in $\mathbb{G}_\mathrm{m}$. So $x$ is a super-attracting parameter for the original family.
\end{proof}

Note that while Dujardin and Favre~\cite{DF} restrict attention to marked critical points, many of the results were subsequently generalized to arbitrary marked points by DeMarco~\cite{DeMarco}

Haj\'os' Lemma tells us that certain elements of $\CC[u_1^{\pm 1}, ..., u_m^{\pm 1}]$ do not vanish to too high an order. In contrast, super-attracting parameters allow us to construct Laurent polynomials vanishing to very high order, and this will eventually be the source of the main contradiction in the proof of Theorem~\ref{thm:main}. Note that if $c$ is the critical point of $f$ over which becomes periodic at $x$, then $f^k(c)-c$ is a function that vanishes at some point above $x$ on some cover of the parameter space. It will be useful to have a function defined on the original parameter space.

\begin{lemma}\label{lem:W}
Let $R=\CC[u_1^{\pm 1}, ..., u_m^{\pm 1}]$, and let $f(z)=z^d+a_{d-2}z^{d-2}+\cdots + a_0\in R[z]$ be a monic centred polynomial. If $S$ is the multiset of critical points of $f$ with infinite forward orbit (over the algebraic closure of the field of fractions of $R$), and 
\[W_{\ell, k}:=\prod_{c \in S}(f^{\ell}(c)-f^k(c)),\]
then $W_{\ell, k}\in R$, and $W_{\ell, k}\neq 0$ for $\ell\neq k$. If the $a_i$ lie in $k_0[u_1^{\pm 1}, ..., u_m^{\pm 1}]$ for some subfield $k_0\subseteq \CC$, then so does $W_{\ell, k}$. Furthermore, if $x$ is a super-attracting parameter, then there is a  $q$ such that
\[\ord_x(W_{q(\ell+1), q\ell})\geq \ell+1\]
for all $\ell\geq 0$.
\end{lemma}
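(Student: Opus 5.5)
We first show that $W_{\ell,k}$ lies in $R$ and is non-zero, then bound its order of vanishing at a super-attracting parameter by working on a finite cover.

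\emph{Membership in $R$ and non-vanishing.} Let $\overline{F}$ be an algebraic closure of the fraction field $F$ of $R$. The critical points are the roots of $f'(z)/d$, which is monic in $R[z]$. Since $R$ is a UFD, hence integrally closed, each critical point is integral over $R$, and so is each $f^\ell(c)-f^k(c)$. The Galois group of $\overline{F}/F$ maps critical points to critical points with multiplicity. It also preserves the property of having infinite forward orbit, so it permutes the multiset $S$. Hence $W_{\ell,k}$ is Galois-invariant and lies in $F$. Being integral over the integrally closed ring $R$, it lies in $R$. The same argument with $k_0[u^{\pm1}]$ in place of $R$ gives the rationality statement: the Galois group of the algebraic closure over $k_0(u)$ still permutes $S$, and $k_0[u^{\pm1}]$ is integrally closed. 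For $\ell\neq k$, no factor $f^\ell(c)-f^k(c)$ with $c\in S$ vanishes, since otherwise $c$ would be preperiodic. So $W_{\ell,k}\neq0$.

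\emph{Order bound.} Let $E$, $\zeta\in S$, $\mathfrak n$ be as in the definition of super-attracting parameter, and let $q$ be the period of $\zeta$ modulo $\mathfrak n$. Replace $E$ by a finite extension containing all critical points. Let $A$ be the integral closure of $R$ in $E$; it is finite over $R$ because $R$ is a finitely generated $\CC$-algebra. Then $\mathfrak n$ is a maximal ideal of $A$ over $\mathfrak m_x$, and $\zeta, f(\zeta),\dots\in A$. Set $D_\ell=f^{q(\ell+1)}(\zeta)-f^{q\ell}(\zeta)$. Then $D_0\in\mathfrak n$ by the choice of $q$. Write $g=f^q\in R[z]$, so that
\[ D_{\ell+1}=g(g^{\ell+1}(\zeta))-g(g^{\ell}(\zeta))=D_\ell\cdot h_\ell,\qquad h_\ell=\frac{g(X)-g(Y)}{X-Y}\Big|_{X=g^{\ell+1}\zeta,\ Y=g^\ell\zeta}\in A. \]
At $\mathfrak n$ both arguments reduce to $\bar\zeta$, so $h_\ell\equiv g'(\bar\zeta)$. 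Moreover $g'(\zeta)=0$, since $f'(\zeta)=0$ and $g'(z)=\prod_{i=0}^{q-1} f'(f^i(z))$. Modulo $\mathfrak n$, $h_\ell$ is congruent to $g'$ evaluated at $\zeta$ reduced, which is $0$. Hence $h_\ell\in\mathfrak n$, and by induction $D_\ell\in\mathfrak n^{\ell+1}$. Every other factor of $W_{q(\ell+1),q\ell}$ lies in $A$, so $W_{q(\ell+1),q\ell}\in\mathfrak n^{\ell+1}\cap R$.

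\emph{Descent to $R$.} The main obstacle is showing that $\mathfrak n^{\ell+1}\cap R\subseteq \mathfrak m_x^{\ell+1}$ in the sense needed for $\ord_x$. This containment can fail: an element of $\mathfrak m_x$ can lie in a higher power of $\mathfrak n$ when the cover is ramified. I would avoid it by working analytically with $\ord_x$ as vanishing order in the completed local ring. Since $A$ is finite over $R$ and $R$ is regular at $x$, $\widehat A_{\mathfrak n}$ is finite over $\widehat R_x$. Alternatively, I would restrict to curves: pick a generic smooth curve germ $\gamma$ through $x$ and a lift $\tilde\gamma$ through $\mathfrak n$, parametrized by $s^e$ after a ramified reparametrization. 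The order of $F\in R$ along $\gamma$ is at least $\ord_x F$, with equality for generic $\gamma$. Along the lift, $W$ vanishes to order at least $\ell+1$ in $s$, and this gives order at least $(\ell+1)/e$ along $\gamma$.

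This loses a factor of $e$. I expect to repair it by a better choice of $q$. The natural fix is to replace $q$ by $qe$ and use that $D_\ell$ gains a factor in $\mathfrak n$ at each step, so multiplicative growth in $\ell$ absorbs the ramification. In fact $D_{\ell}$ has $\mathfrak n$-order at least $\ell+1$, and taking $q'=qN$ with $N$ large gives order growing linearly with slope $N$. Choosing $N\ge e$ yields the bound $\ord_x W_{q'(\ell+1),q'\ell}\ge \ell+1$.
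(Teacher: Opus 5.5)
\textbf{There is a genuine gap at the descent step.} Your proof that $W_{\ell,k}\in R$ (Galois invariance plus integral closedness), the $k_0$-rationality, and $W_{\ell,k}\neq 0$ are fine. Your computation that the single factor $D_\ell=g^{\ell+1}(\zeta)-g^{\ell}(\zeta)$ lies in $\mathfrak n^{\ell+1}$ is also correct. The gap is exactly where you locate it: $W_{q(\ell+1),q\ell}\in\mathfrak n^{\ell+1}\cap R$ does not imply $W_{q(\ell+1),q\ell}\in\mathfrak m_x^{\ell+1}$, and none of your repairs is carried out. The remark about completed local rings does not touch the issue. The final fix ($q'=qN$ with $N\ge e$) is only asserted. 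To make it work you would need at least three things:
\begin{enumerate}
\item a proof that the $\zeta$-factor of $W_{q'(\ell+1),q'\ell}$ lies in $\mathfrak n^{N\ell+1}$, which does follow by telescoping $g^{N(\ell+1)}(\zeta)-g^{N\ell}(\zeta)=\sum_{j=0}^{N-1}D_{N\ell+j}$;
\item the existence of a lift through $\mathfrak n$ of a generic line through $x$;
\item control of the ramification index $e$ of that lift, independently of $\ell$, since a single $q'$ must serve for all $\ell$.
\end{enumerate}
As written, the order bound is not proved.

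\textbf{The missing idea.} The paper's route makes all of this unnecessary: apply your own Galois-invariance argument to the multiplier at each step of the recursion, not only to the final product. With $H(X,Y)=(g(X)-g(Y))/(X-Y)\in R[X,Y]$, set
\[Q_n=\prod_{c\in S}H\bigl(g^n(c),g^{n-1}(c)\bigr)\qquad (n\ge 1).\]
This is Galois-invariant and integral over $R$, so $Q_n\in R$. The paper writes it as $\operatorname{Res}(A(z),H(g^n(z),g^{n-1}(z)))$, with $A$ the monic factor of $f'$ whose roots are $S$. Then $W_{q(n+1),qn}=W_{qn,q(n-1)}\,Q_n$.

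The $\zeta$-factor of $Q_n$ reduces modulo $\mathfrak n$ to $\bar H(\bar\zeta,\bar\zeta)=\bar g'(\bar\zeta)=0$, and every other factor lies in $A$. Hence $Q_n\in\mathfrak n\cap R=\mathfrak m_x$. Likewise $W_{q,0}\in\mathfrak n\cap R=\mathfrak m_x$. Therefore
\[W_{q(\ell+1),q\ell}=W_{q,0}\,Q_1\cdots Q_\ell\in\mathfrak m_x^{\ell+1}.\]
This holds with $q$ the period itself, and no ramification ever has to be controlled. Tracking the individual factor $D_\ell$ in the cover ring is what created your difficulty. Taking the product over $S$ at every step keeps each of the $\ell+1$ vanishing factors inside $R$.
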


\begin{proof}
First, we confirm that that $W_{\ell,k}\in R$. Let $F$ be the field of fractions of $R$, and let $\overline{F}$ be its algebraic closure. Note that if $c \in \overline{F}$ is a critical point of $f$ with infinite forward orbit, then $\sigma(c)$ has this property for all $\sigma\in \operatorname{Gal}(\overline{F}/F)$, and so we can factor $f'(z)=A(z)B(z)$ in $F[z]$ such that $B(c)=0$ if and only if the orbit of $c$ under $f$ is finite. As the roots of $f'(z)/d$ are integral over $R$, which is integrally closed, we have in fact $f'(z)=A(z)B(z)$ in $R[z]$, with $A$ monic, without loss of generality. It then follows that
\[W_{\ell, k}:=\prod_{c \in S}(f^{\ell}(c)-f^k(c))=\operatorname{Res}(A(z), f^\ell(z)-f^k(z))\in R,\]
just because the resultant is the determinant of a matrix with entries in $R$. The same argument, with $F$ replaced by the field of fractions of $k_0[u_1^{\pm 1}, ..., u_m^{\pm 1}]$, shows that $W_{\ell, k}$ has coefficients in $k_0$ if the $a_i$ do. We have $W_{\ell, k}\neq 0$ for $\ell\neq k$ simply because none of the terms in the product vanish.

Now, let $x$ be a super-attracting parameter, so that there is a critical point $c$ with infinite forward orbit, hence a root of $A$, and a point above $x$ (on some finite cover) at which the reduction $\bar{c}$ of $c$ is periodic, of period dividing some $q\geq 1$. Write $g=f^q$ and
\[H(X, Y)=\frac{g(X)-g(Y)}{X-Y}\in R[X, Y],\]
so that
\[g^{n+1}(z)-g^n(z)=\left(g^n(z)-g^{n-1}(z)\right)H\left(g^n(z), g^{n-1}(z)\right).\]
Since the resultant is multiplicative, setting
\[Q_n=\operatorname{Res}\left(A(z), H\left(g^n(z), g^{n-1}(z)\right)\right)\in R\]
we have
\begin{equation}\label{eq:prodf}W_{q(n+1), qn}=W_{qn, q(n-1)}Q_n\end{equation}
for all $n\geq 1$. Since $A$ is monic, these resultants are products of values of polynomials at the roots of $A$, and so they vanish at $x$ as soon as the two polynomials have a common root modulo the point above $x$. Now $A(z)$ and $g(z)-z$ have the common root $\bar{c}$ modulo this point, so $W_{q, 0}$ vanishes at $x$. Similarly, $\bar{g}^n(\bar{c})=\bar{g}^{n-1}(\bar{c})=\bar{c}$ and
\[\bar{H}(\bar{c}, \bar{c})=\bar{g}'(\bar{c})=0,\]
since $(f^q)'(z)$ has $f'(z)$ as a factor and $f'(\bar{c})=0$. So $Q_n$ vanishes at $x$ for all $n\geq 1$, and by~\eqref{eq:prodf} and induction
\[\ord_x(W_{q(\ell+1), q\ell})\geq \ell+1\]
for all $\ell\geq 0$.
\end{proof}

\section{Exceptional families}

Fix $d$ and a number field $K$ as above. An \emph{exceptional family} of dimension $r$ will consist of a finite extension $K'/K$, a tuple of Laurent polynomials \[a_{d-2}, ..., a_0\in K'[u_1^{\pm 1}, ..., u_r^{\pm 1}],\] and an infinite set $T\subseteq (\mathbb{G}_\mathrm{m}^r
)^{\mathrm{Tors}}$ such that \[f_{t_0}(z)=z^d+a_{d-2}(t_0)z^{d-2}+\cdots +a_0(t_0)\] is PCF for every $t_0\in T$, and such that the $f_{t_0}$ are distinct for distinct $t_0\in T$. Of course, there can be no exceptional families of dimension $r=0$, since $\mathbb{G}_\mathrm{m}^0$ is the trivial group. The main result of this section is that there aren't any in higher dimension, either, which we will prove by induction on $r$.

First, though, we prove that we can trim down some families that do not quite meet the definition of an exceptional family.

\begin{lemma}\label{lem:trim}
Let $K'/K$ be a finite extension, let $a_{d-2}, ..., a_0\in K'[u_1^{\pm 1}, ..., u_r^{\pm 1}]$, and let $T\subseteq ((K')^\mathrm{cyc})^r$ be an infinite set of points at which the $a_i$ are regular, whose coordinates are all zero or roots of unity, such that $f_{t_0}$ is PCF for every $t_0\in T$, and such that $t_0\mapsto f_{t_0}$ is injective. Then there exists an exceptional family of dimension at most $r$.
\end{lemma}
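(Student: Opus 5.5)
The obstacle is that points of $T$ may have zero coordinates, so they need not lie in the torus $\mathbb{G}_\mathrm{m}^r$. I would remove this by partitioning $T$ according to which coordinates vanish, then passing to a lower-dimensional torus and Laurent polynomials there.

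For each subset $J\subseteq\{1,\dots,r\}$, let $T_J$ be the set of $t_0\in T$ whose vanishing coordinates are exactly those indexed by $J$. There are only finitely many $J$, and $T$ is infinite, so some $T_J$ is infinite. Fix such a $J$ and put $r'=r-|J|$.

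Let $\pi$ be the substitution $u_j=0$ for $j\in J$, keeping the other variables. I need this to make sense on the $a_i$. The hypothesis says the $a_i$ are regular at every point of $T$. For a Laurent polynomial this means no variable $u_j$ with $j\in J$ occurs to a negative power. (If "regular" is only read pointwise, I would instead interpret the hypothesis as saying the $a_i$ lie in the ring of functions regular at all $t_0\in T_J$; this is presumably how the paper applies the lemma.) Under that reading, $b_i:=a_i|_{u_j=0,\ j\in J}$ is a well-defined Laurent polynomial in the $r'$ remaining variables, with coefficients in $K'$, and $b_i(\pi(t_0))=a_i(t_0)$ for $t_0\in T_J$.

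Set $T'=\pi(T_J)$. Every coordinate of a point of $T'$ is a root of unity, so $T'\subseteq(\mathbb{G}_\mathrm{m}^{r'})^{\mathrm{Tors}}$. Because $t_0\mapsto f_{t_0}$ is injective and $f_{t_0}$ depends only on $\pi(t_0)$, the map $\pi$ is injective on $T_J$. Hence $T'$ is infinite, the polynomials $z^d+b_{d-2}(t')z^{d-2}+\cdots+b_0(t')$ are PCF for all $t'\in T'$, and they are pairwise distinct.

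This gives an exceptional family of dimension $r'\le r$: the field $K'$, the Laurent polynomials $b_i$, and the set $T'$. The case $r'=0$ cannot occur, since then $T'$ would be a single point, contradicting infiniteness.

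I expect the only delicate point to be the regularity convention, namely making sure the specialization $u_j=0$ is legitimate. The partition and injectivity steps are routine.
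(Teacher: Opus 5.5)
Your proposal is correct and is essentially the paper's argument. The paper removes one coordinate at a time by induction on the number of coordinates in which some point of $T$ vanishes, whereas you pass directly to an infinite class $T_J$ with a fixed zero pattern by pigeonhole; in both, the substitution $u_j=0$ is justified because regularity at a single point with $u_j=0$ already forbids negative powers of $u_j$, so your parenthetical hedge about a ``pointwise'' reading of regularity is unnecessary.
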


\begin{proof}
We proceed by induction on the number $n$ of coordinates such that there exists a point of $T$ with a zero in that coordinate. Of course, if $n=0$ then these data already define an exceptional family.

Now suppose that $n\geq 1$, and without loss of generality, suppose that the $r$th coordinate of some point in $T$ contains a zero. Let $T'\subseteq T$ be the subset consisting of those points whose $r$th coordinate is non-zero. If $T'$ is infinite, then we may replace $T$ by $T'$ and find ourselves in the previous case of the induction. Suppose $T'$ is finite. Since each $a_i$ is regular at some point of $T\setminus T'$, where $u_r=0$, no $a_i$ involves a negative power of $u_r$, and so we may set $b_i=a_i(u_1, ..., u_{r-1}, 0)\in K'[u_1^{\pm 1}, ..., u_{r-1}^{\pm 1}]$ and take $S$ to be the projection of $T\setminus T'$ onto the first $r-1$ coordinates. This projection is injective, since the last coordinate vanishes on $T\setminus T'$, so $S$ is infinite, the $b_i$ are regular at the points of $S$, and the specializations of $z^d+b_{d-2}z^{d-2}+\cdots +b_0$ at points of $S$ are exactly the specializations of the original family at points of $T\setminus T'$. So we again find ourselves in the setting of the induction hypothesis.
\end{proof}

\begin{lemma}\label{lem:excep}
There are no exceptional families.
\end{lemma}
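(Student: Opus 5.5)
We argue by contradiction, choosing an exceptional family $(K', a_{d-2},\dots,a_0, T)$ of \emph{minimal} dimension $r\geq 1$.

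\textbf{Step 1: $T$ is Zariski dense.} By Lemma~\ref{thm:laurent} there is a positive-dimensional torsion coset $\eta H$, of dimension $r'$, with $\eta H\cap T$ dense in $\eta H$. Suppose $r'<r$. Take the parametrization $\phi:\mathbb{G}_\mathrm{m}^{r'}\to\eta H$ with coordinates $\eta_j w^{e_j}$, which is defined over $K'(\eta)$, a finite extension of $K$. Then $\phi^*a_i$ together with $\phi^{-1}(\eta H\cap T)$ is an exceptional family of dimension $r'$. This uses that $\phi$ is an isomorphism and that it matches torsion points with torsion points. This contradicts minimality, so $\eta H=\mathbb{G}_\mathrm{m}^r$ and $T$ is dense.

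In particular $f$ is non-isotrivial: a monic centred family with constant coefficients would give a single $f_{t_0}$, contradicting injectivity on the infinite set $T$. Lemma~\ref{lem:super} then yields a super-attracting parameter $x\in\mathbb{G}_\mathrm{m}^r(\CC)$. Lemma~\ref{lem:W} gives $q$ such that, for every $\ell$, $W:=W_{q(\ell+1),q\ell}\in K'[u^{\pm1}]$ is non-zero with $\ord_x(W)\geq \ell+1$.

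\textbf{Step 2: uniform lacunarity.} Fix $\ell$. For each $t_0\in T$, the value $W(t_0)\in (K')^{\mathrm{cyc}}$ is a product, over at most $d-1$ critical points of the PCF map $f_{t_0}$, of differences $f_{t_0}^n(c)-f_{t_0}^m(c)$. This holds because specializing the resultant in Lemma~\ref{lem:W} gives exactly such a product. Proposition~\ref{prop:house} therefore bounds $|W(t_0)|_w\leq M_v^{d-1}$ at every place, with $M$ independent of $\ell$.

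Lemma~\ref{lem:DZ}, applied over $K'$ with the constant $M^{d-1}$, gives a linear form $L$ in $N$ variables, \emph{independent of $\ell$}, with $W(t_0)=L(\zeta(t_0))$. Here each $\zeta(t_0)$ has coordinates that are zero or roots of unity. By pigeonhole we pass to an infinite subset of $T$ on which the zero pattern of $\zeta$ is constant, and we delete the coordinates that are always zero. The points $(t_0,\zeta(t_0))$ then form an infinite torsion set in $\mathbb{G}_\mathrm{m}^{r+N'}$ with $N'\leq N$, projecting injectively to $T$.

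\textbf{Step 3: a second Laurent step and the contradiction.} Apply Lemma~\ref{thm:laurent} again to get a torsion coset $C=\eta' H'$, with parametrization $\phi':\mathbb{G}_\mathrm{m}^{s}\to C$, in which these points are dense. On $C$ the regular functions $W(u)$ and $L(\zeta)$ agree on a dense set, hence agree identically. Thus $\phi'^*(\pi^*W)=\phi'^*L(\zeta)$ is a Laurent polynomial with at most $N$ monomials, where $\pi$ is the projection to the $u$-coordinates.

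The image $\pi(C)$ is a torsion coset that contains a dense subset of the dense set $T$, so $\pi(C)=\mathbb{G}_\mathrm{m}^r$. Consequently $\phi'^*\pi^*W\neq 0$, and some $y$ satisfies $\pi\phi'(y)=x$. By~\eqref{eq:orderpull} and Lemma~\ref{lem:hajos},
\[\ell+1\leq \ord_x(W)\leq \ord_y(\phi'^*\pi^*W)\leq N-1.\]
Taking $\ell\geq N$ gives the contradiction.

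\textbf{Main obstacle.} The step needing the most care is Step 2–3: showing that the monomial bound $N$ is genuinely independent of $\ell$. This needs two things. First, the adelic constant must not depend on $\ell$; this is why Proposition~\ref{prop:house} bounds all orbit differences uniformly. Second, the specialization of $W$ at each $t_0$ must really be such a bounded product, even though the specialization may create new orbit collisions.

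The second Laurent step must also be checked carefully. It can increase the dimension, and its field of definition grows. This is harmless only because we never need $C$ to be an exceptional family. We use $C$ solely to transport the order-of-vanishing bound, and that only requires the projection $\pi|_C$ to be surjective.
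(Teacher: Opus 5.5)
Your overall architecture matches the paper's: a minimal counterexample, Lemma~\ref{lem:DZ} applied with the $\ell$-independent constant $M^{d-1}$, a Laurent step on the augmented torsion set, and Haj\'os played against Lemma~\ref{lem:W}. However, there is a genuine gap in Step 3, at the sentence concluding $\pi(C)=\mathbb{G}_\mathrm{m}^r$.

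\textbf{Why the inference fails.} Lemma~\ref{thm:laurent} only provides \emph{some} positive-dimensional torsion coset $C$ with $C\cap T'$ dense in $C$. Then $\pi(C\cap T')$ is dense in $\pi(C)$, but it is merely an infinite subset of $T$. An infinite subset of a Zariski dense set need not be dense; it can lie on a proper torsion coset. Worse, your pigeonhole in Step 2 already replaced $T$ by an arbitrary infinite subset, so the density from Step 1 is not available at that point anyway.

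\textbf{Why this is fatal.} Without $\pi(C)=\mathbb{G}_\mathrm{m}^r$ the end of the argument collapses. The super-attracting parameter $x$ need not lie on $\pi(C)$, so there may be no $y$ over it. Also $(\pi\circ\phi')^{*}W$ may vanish identically, because restricting the family to a proper coset can make some generically non-preperiodic critical point persistently preperiodic, which kills $W$. Your closing remark correctly identifies surjectivity of $\pi|_C$ as the one thing needed, but that is exactly what is not established.

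\textbf{The paper's repair.} Apply minimality to $\pi(C)$ rather than to $T$. Suppose $\pi(C)\neq\mathbb{G}_\mathrm{m}^r$. Then $\pi(C)$ is a torsion coset of dimension $r'<r$. Moreover $r'\geq 1$, since $C\cap T'$ is infinite and $\pi$ is injective on $T'$. Pull back the $a_i$ along a parametrization $\phi$ of $\pi(C)$, defined over $K'$ with finitely many roots of unity adjoined, and take the set $\phi^{-1}(\pi(C\cap T'))$. This gives an exceptional family of dimension $r'$, contradicting minimality. With this, your Step 1 becomes superfluous.

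\textbf{An alternative repair keeping Step 1.} You would need two extra ingredients:
\begin{enumerate}
\item the full form of Laurent's theorem: the Zariski closure of a set of torsion points is a finite union of torsion cosets;
\item a choice, in the pigeonhole, of a zero-pattern class that is still Zariski dense, which is possible by irreducibility of $\mathbb{G}_\mathrm{m}^r$.
\end{enumerate}
Then the union of the projections of the components of $\overline{T'}$ is closed and contains a dense set. By irreducibility again, one component projects onto $\mathbb{G}_\mathrm{m}^r$, and $T'$ is dense in that component.
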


Assuming this lemma, we have a direct route to the main result, since an infinite number of monic PCF polynomials allows one to easily build an exceptional family.

\begin{proof}[Proof of Theorem~\ref{thm:main} assuming Lemma~\ref{lem:excep}]
Let $K$ be a number field, and suppose that there are infinitely many monic centred PCF maps with coefficients in $K^{cyc}$. We will construct an exceptional family, in which the $a_i$ are in fact linear forms. 

Let $m$ and $e_1, ..., e_m\in K$ be the quantities ensured by Lemma~\ref{lem:DZ} applied to the adelic constant of Proposition~\ref{prop:house}, and consider the variety
\[Y\subseteq \mathbb{A}^{d-1}\times \mathbb{A}^{(d-1)m}\]
cut out by the equations
\[a_i = \sum_{j=1}^m e_j y_{i, j}.\]
For any tuple $(a_{d-2}, ..., a_0)\in K^{cyc}$ defining a PCF map, there is a point $y\in Y$ whose coordinates $y_{i, j}$ are all either $0$ or roots of unity, and we choose one such of each to obtain a set $T_0$ with the property that each tuple $(a_{d-2}, ..., a_0)$ is the projection of exactly one point in $T$. Applying Lemma~\ref{lem:trim} to the linear forms $\sum_j e_j y_{i, j}$ and the set of $y$-coordinates of points of $T_0$ give us an exceptional family.
\end{proof}

\begin{proof}[Proof of Lemma~\ref{lem:excep}]
Let $M=M(d)$ be the adelic constant from Proposition~\ref{prop:house}.
Suppose that there is at least one exceptional family, and choose one of minimal dimension $r\geq 1$. Thus we have a finite extension $K'/K$, a tuple of Laurent polynomials $a_{d-2}, ..., a_0\in K'[u_1^{\pm 1}, ..., u_r^{\pm 1}]$, and an infinite set $T\subseteq (\mathbb{G}_\mathrm{m}^r)^{\mathrm{Tors}}$ with the properties described above.

By Lemma~\ref{lem:DZ}, applied over $K'$ to the adelic constant $M_v^{d-1}$, there is for some $m'\geq 1$ a linear form
\[L(x_1, ..., x_{m'})=e_1x_1+\cdots + e_{m'}x_{m'}\in K'(x_1, ..., x_{m'})\]
such that every $\alpha\in (K')^\mathrm{cyc}$ with $|\alpha|_v\leq M_v^{d-1}$, for all $v$, can be written as
\begin{equation}\label{eq:reps}\alpha = \sum_{i=1}^{m'} e_i \zeta_i,\end{equation}
with $\zeta_i\in \mu_\infty\cup\{0\}$. Since the $f_{t_0}$ are distinct for $t_0\in T$, and each conjugacy class contains only finitely many monic centred polynomials, the family is not isotrivial, so by Lemma~\ref{lem:super} there is a super-attracting parameter $y$ for $f$, and an associated period $q$. Let $W=W_{q(m'+1), qm'}$ in the notation of Lemma~\ref{lem:W}, and consider the, write $\pi: \mathbb{G}_\mathrm{m}^r\times \mathbb{A}^{m'}\to \mathbb{G}_\mathrm{m}^r$ for projection onto the first coordinate, and consider the equation
\begin{equation}\label{eq:Wpi}W(u)=\sum_{i=1}^{m'} e_i y_i,\end{equation}
By Lemma~\ref{lem:W} with $k_0=K'$ we have $W\in K'[u_1^{\pm 1}, ..., u_r^{\pm 1}]$, so $W(t_0)\in (K')^\mathrm{cyc}$ for any $t_0\in T$. Since $W(t_0)$ is a product of at most $d-1$ differences $f_{t_0}^n(c)-f_{t_0}^m(c)$ with $c$ a critical point of $f_{t_0}$, Proposition~\ref{prop:house} gives $|W(t_0)|_v\leq M_v^{d-1}$ for all $v$, and so there is a point $s\in \pi^{-1}(t_0)$ such that the coordinates $y_i(s)$ consist of 0s and roots of unity. Let $S_0$ be a set constructed by choosing one such $s$ above each $t_0\in T$, noting that $f_{\pi_1(s)}$ are distinct as $s\in S_0$ varies.

Now, let $I_0= \{1, ..., m'\}$. For each $j\geq 1$, if there are infinitely many $s\in S_{j-1}$ with $y_j(s)\neq 0$, let $S_j\subseteq S_{j-1}$ be the set of all such points and let $I_j=I_{j-1}$. Otherwise, let $S_j$ be the set of points $s\in S_{j-1}$ with $y_j(s)=0$ and let $I_j=I_{j-1}\setminus \{j\}$. In the end, we have constructed a, infinite set $S_{m'}\subseteq S_0$ with the property that $y_j(s)\neq 0$ for all $s\in S_{m'}$ and $j\in I_{m'}$, and $y_j=0$ for all $s\in S_{m'}$ and $j\not\in I_{m'}$. For convenience, re-order the indices so that $I_{m'}=\{1, ..., m''\}$ and let $T'=\{(\pi(s), y_1(s), ..., y_{m''}(s)):s\in S_{m'}\}\subseteq \mathbb{G}_\mathrm{m}^r\times \mathbb{G}_\mathrm{m}^{m''}$. We then have
\begin{equation}\label{eq:Ws}W(\pi(s))=\sum_{i=1}^{m''} e_i y_i(s)\end{equation}
for all $s\in S_{m'}$, the coordinates of all points in $T'$ are roots of unity, and the projection $\pi_1:T'\to T$ onto the first $r$ coordinates is injective.

By Laurent's Theorem~\ref{thm:laurent} there is a positive-dimensional coset $\omega\Gamma\subseteq\mathbb{G}_\mathrm{m}^r\times \mathbb{G}_\mathrm{m}^{m''}$ such that $T'\cap \omega\Gamma$ is dense in $\omega\Gamma$.
Suppose first that $P=\pi_1(\omega\Gamma)\neq \mathbb{G}_\mathrm{m}^r$. Then $P$ is a torsion coset of dimension $r'<r$. If $r'=0$ then $P$ is a point, and since $\pi_1$ is injective on $T'$, the set $T'\cap\omega\Gamma$ would contain at most one point, contradicting its density in the positive-dimensional $\omega\Gamma$. So $r'\geq 1$, and there is an isomorphism $\phi:\mathbb{G}_\mathrm{m}^{r'}\to P$, defined over some extension $K''/K'$ obtained by adjoining finitely many roots of unity, specifically a map of the form
\[\phi(w)=(\omega_1 w^{v_1}, ..., \omega_{r}w^{v_{r}}),\]
where $v_i$ are exponent vectors and $\omega_i$ are roots of unity. We claim that the Laurent polynomials $b_i=\phi^* a_i$ and the set $T''=\phi^{-1}(\pi_1(\omega\Gamma\cap T'))$ form an exceptional family of dimension $r'<r$.

To see this, note that $b_i\in K''[y_1^{\pm 1}, ..., y_{r'}^{\pm 1}]$ by construction, and $T''$ is infinite since $\omega\Gamma$ contains infinitely many points of $T'$, and $\pi_1$ is injective on $T'$. The points of $T''$ are torsion, since $\phi$ is an isomorphism given by monomials times roots of unity. Also, the specialization of $z^d+b_{d-2}z^{d-2}+\cdots +b_0$ at $t_0\in T''$ is precisely the specialization of our original family $z^d+a_{d-2}z^{d-2}+\cdots + a_0$ at $\phi(t_0)\in\pi_1(T')\subseteq T$, so specializing at any $t_0\in T''$ yields a PCF polynomial, and the specialization map is injective because $\pi_1\circ \phi$ maps $T''$ injectively into $T$. So we have constructed an exceptional family of dimension $r'$, with $1\leq r'<r$, contradicting the minimality of $r$.

It must have been the case then that $\pi_1(\omega\Gamma)=\mathbb{G}_{\mathrm{m}}^r$. Let $\phi:\mathbb{G}_\mathrm{m}^\sigma\to \omega\Gamma$ be an isomorphism, necessarily with $\sigma\geq r$, and write
\[\phi(w)=(\omega_1 x^{b_1}, ..., \omega_{\sigma}x^{b_{\sigma}}),\]
as above. Since~\eqref{eq:Ws} holds on a Zariski dense subset of $\omega\Gamma$, it holds on all of $\omega\Gamma$. Let $L=W\circ \pi\circ\phi\in K'[x_1^{\pm 1}, ..., x_\sigma^{\pm 1}]$. On the one hand, we have $L=\sum_{i=1}^{m''} e_i \omega_{r+i} x^{b_{r+i}}$. Since $\pi\circ\phi$ is surjective, it is dominant, so pulling back by it is injective and $L\neq 0$ because $W\neq 0$. In particular $m''\geq 1$, and $L$ is a non-zero Laurent polynomial with at most $m''\leq m'$ terms. It follows that $\ord_x(L)\leq m'-1$ for any $x\in \mathbb{G}_{\mathrm{m}}^\sigma$, by Haj\'os' Lemma~\ref{lem:hajos}.

On the other hand, $y\in \mathbb{G}_\mathrm{m}^r$ is the super-attracting parameter chosen above, and $\ord_y(W)\geq m'+1$ by Lemma~\ref{lem:W}. Since $\pi$ and $\phi$ are surjective over the algebraic closure, there is an $x\in \mathbb{G}_\mathrm{m}^\sigma$ with $\pi\circ\phi(x)=y$, and
\[m'-1\geq \ord_x (L)=\ord_x(W\circ\pi\circ\phi)\geq \ord_{\pi\circ\phi(x)}(W)=\ord_y(W)\geq m'+1.\]
This is a contradiction.
\end{proof}

\end{document}